\newtheorem{theorem}{Theorem}[section]
\newtheorem{corollary}{Corollary}[theorem]
\newtheorem{lemma}[theorem]{Lemma}
\newcounter{yuppo}
\newcommand{\Keler} {K\"{a}hler }
\newcommand{\End}{\operatorname{End}}
\newcommand{\cds}{\cdots}
\newcommand{\cd}{\cdot}
\renewcommand{\phi}{\varphi}
\newcommand{\ra}{\rightarrow}
\newcommand{\lra}{\longrightarrow}
\newcommand{\C}{\mathbb{C}}
\newcommand{\R}{\mathbb{R}}
\newcommand{\Gl}{\operatorname{Gl}}
\newcommand{\zero}{\bigg \vert _{t=0} }
\newcommand{\liu}{\mathfrak{u}}
\newcommand{\lia}{\mathfrak{a}}
\newcommand{\lieg}{\mathfrak{g}}
\newcommand{\liep}{\mathfrak{p}}
\newcommand{\liea}{\mathfrak{a}}
\newcommand{\noparty}[1]{}
\newcommand{\mup}{\mu_\liep}
\newcommand{\mupb}{\mu_\liep^\beta}
\title{Common singularities  of commuting vector fields }
\author{Leonardo Biliotti}
\address{Dipartimento di Scienze Matematiche, Fisiche e Informatiche \\
          Universit\`a di Parma (Italy)}
\email{leonardo.biliotti@unipr.it}
\author{Oluwagbenga Joshua Windare}
\address{Dipartimento di Scienze Matematiche, Fisiche e Informatiche \\
          Universit\`a di Parma (Italy)}
\email{oluwagbengajoshua.windare@unipr.it}
\keywords{Momentum map, Reductive Lie group}
\thanks{The first author was partially supported by PRIN  2017
   ``Real and Complex Manifolds: Topology, Geometry and holomorphic dynamics '' and GNSAGA INdAM. The second author was supported by GNSAGA of INdAM}
\subjclass[2010]{53D20; 14L24.}
\begin{document}
\maketitle
\begin{abstract}
\noindent
We study the singularities of commuting vectors fields of a real submanifold of a K\"ahler manifold $Z$. 
\end{abstract}
\section{Introduction}
\pagenumbering{arabic}
Let $(Z,\omega)$ be a connected  \Keler manifold with an holomorphic action of a complex reductive group $U^\C$, where $U^\C$ is the complexification of a compact connected Lie group $U$ with Lie algebra $\mathfrak{u}$. We also assume $\omega$ is $U$-invariant and that there is a $U$-equivariant momentum map $\mu : Z \to \mathfrak{u}^*.$ By definition, for any $\xi \in \mathfrak{u}$ and $z\in Z,$ $d\mu^\xi = i_{\xi_Z}\omega,$ where $\mu^\xi(z) := \mu(z)( \xi )$ and $\xi_Z$ denotes the fundamental vector field induced on $Z$ by the action of $U,$ i.e.,
$$\xi_Z(z) := \frac{d}{dt}\zero \exp(t\xi)z
$$
(see, for example, \cite{Kirwan} for more details on the momentum map).
Since $U$ is compact we may identify $\mathfrak{u}  \cong \mathfrak{u}^*$ by an
$\mathrm{Ad}(U)$-invariant scalar product on $\mathfrak{u}$. Hence, we consider a momentum map as a $\mathfrak{u}$-valued map, i.e., $\mu : Z \to \mathfrak{u}$. Recently, the momentum map has been generalized to the following settings \cite{PG,heinzner-schwarz-stoetzel}.

We say that a subgroup $G$ of $U^\C$ is compatible if $G$ is closed and the Cartan decomposition $U^\C=U\exp(\text{i} \liu)$ induces a Cartan decomposition of $G$. This means that the map $K\times \mathfrak{p} \to G,$ $(k,\beta) \mapsto k\text{exp}(\beta)$
is a diffeomorphism where $K := G\cap U$ and $\mathfrak{p} := \mathfrak{g}\cap \text{i}\mathfrak{u};$ $\mathfrak{g}$ is the Lie algebra of $G.$
In particular $K$ is a maximal compact subgroup of $G$ with Lie algebra $\mathfrak{k}$ and that
$\mathfrak{g} = \mathfrak{k}\oplus \mathfrak{p}$. 

The inclusion i$\mathfrak{p}\hookrightarrow \mathfrak{u}$ induces by restriction, a $K$-equivariant map $\mu_{\text{i}\mathfrak{p}} : Z \to (\text{i}\mathfrak{p})^*$ by composing the momentum map $\mu$ with the restriction map $\liu^* \to (\text{i} \liep)^*$. Using a Ad$(U)$-invariant scalar product on $\text{i}\liu$ requiring multiplication by $\text{i}$ to be an isometry between $\liu$ and $\text{i}\liu$, $\mu_{\text{i}\mathfrak{p}}$ can be viewed as the orthogonal projection of $\text{i}\mu(z)$ onto $\liep$ given as $\mu_\liep : Z \ra \liep$. Let $\mu_\liep^\beta(z) := \langle\mu_\liep(z), \beta\rangle = \langle \text{i}\mu(z), \beta\rangle = \langle \mu(z), -\text{i}\beta\rangle = \mu^{-\text{i}\beta}(z)$
for any $\beta \in \mathfrak{p}$ and $ z\in Z.$  Then grad$\, \mu_\mathfrak{p}^\beta = \beta_Z$ where grad is computed with respect to the Riemannian metric induced by the \Keler structure. The map $\mu_\mathfrak{p}$ is called the gradient map associated with $\mu$. In this paper, a $G$-invariant compact connected locally closed real submanifold $X$ of $Z$ is fixed and the restriction of $\mu_\mathfrak{p}$ to $X$ is denoted by $\mu_\mathfrak{p}$. Then $\mup:X \to \liep$ is a $K$-equivariant map such that $
\text{grad}\mu_\mathfrak{p}^\beta = \beta_X,
$ where the gradient is computed with respect to the induced Riemannian metric on $X$ denoted by $(\cdot,\cdot)$. By the Linearization Theorem  \cite{heinzner-schwarz-stoetzel,sjamaar}, $\mu_\liep^\beta$ is a Morse-Bott function \cite{LA,heinzner-schwarz-stoetzel}  and the  limit 
$$
\phi_\infty^\beta (x):=\lim_{t\to +\infty} \exp(t\beta )x
$$
exists and it belongs to $X^\beta :=\{z\in X:\, \beta_X (z)=0\}$ for any $x\in X$.  The Linearization Theorem \cite{heinzner-schwarz-stoetzel,sjamaar} also proves that any connected component of $X^\beta$ is an embedded submanifold, see for instance \cite{LA,heinzner-schwarz-stoetzel}.  

Let $c_1 > \cds > c_k$ be the critical values of $\mup^\beta.$ Let $C_1,\ldots, C_k$ be the connected components of $X^\beta$ and $W_i:=\{x\in X:\, \lim_{t\to +\infty} \exp(t\beta)x \in C_i\}$. Then $\mup^\beta (C_i)=c_i$ and applying again the Linearization Theorem \cite{heinzner-schwarz-stoetzel,sjamaar}, the submanifold $C_i$ is a connected component of  $(\mup^\beta)^{-1} (c_i)$.  One of the most important Theorem of Morse theory proves that $W_i$ is an embedded submanifold, which is called \emph{unstable manifold} of the critical submanifold $C_i$, and $\phi_\infty^\beta : W_i \to C_i$ is smooth \cite{Bott}.

Let $T$ be a torus of $U$. This means that $T$ is a connected compact Abelian subgroup of $U$ \cite{ad}. By a Theorem of Koszul, \cite{DK}, the connected components of $Z^T:=\{x\in Z:\, T\cdot x=x\}$ are embedded K\"ahler submanifolds of $Z$. Let $\mathfrak t$ be the Lie algebra of $T$. It is well-known that the set
$$
\left\{\beta\in \mathfrak t:\, \overline{\exp(\R \beta)}=T \right\},
$$
contains a dense subset \cite{ad}. Hence,
\begin{equation}\label{complex}
Z^T=Z^{T^\C}=\left\{p\in Z:\, \beta_Z(p)=0\right\},
\end{equation}
for some $\beta \in \mathfrak t$. This means $Z^T$ is the set of the singularities of the vector field $\beta_Z$. Moreover,  $Z^T$ is the image of the gradient flow $\phi_\infty^\beta$ defined by $\mu^\beta$.

In this paper, we  investigate the fixed point set of the action of an  Abelian compatible subgroup of $U^\C$ acting on a real submanifold of $Z$. Let $\lia \subset \liep$ be an Abelian subalgebra and $A=\exp(\lia)$. Then the $A$-gradient map on $X$ is given by  $\mu_{\lia}=\pi_\lia \circ \mup$, where  $\pi_\lia:\liep \lra \lia$ denotes the orthogonal projection of $\liep$ onto $\lia$. Since $A$ is Abelian, then  by Lemma \ref{compatible},  for any $p\in X,$ the stabilizer $A_p :=\{a\in A:\, ap=p\}=\exp (\lia_p)$, where $\lia_p$ is the Lie algebra of $A_p$.  Therefore $X^A=\{p\in X:\, A\cdot p=p \}=\{ p\in X:\, \beta_X (p)=0,\, \forall \beta \in \lia\}$. Hence, if $\alpha_1,\ldots,\alpha_n$ is a basis of $\lia,$ then $X^A$ is the set of the common singularities of the commuting vector fields $(\alpha_1)_X, \ldots, (\alpha_n)_X$.
Our first main result is the following
\begin{theorem}
The set $\left\{\beta \in \lia:\, X^\beta=X^A \right\}$ is dense in $\lia$.
\end{theorem}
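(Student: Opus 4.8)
The plan is to reduce the statement to a finiteness property of the infinitesimal stabilizers, and then to use that the complement of a finite union of proper subspaces of a finite-dimensional real vector space is dense. First I would record the dictionary between isotropy and the vanishing of fundamental vector fields: for $p\in X$ and $\beta\in\lia$ one has $\beta_X(p)=0$ if and only if $\exp(t\beta)p=p$ for all $t$, i.e. if and only if $\beta\in\lia_p$, the Lie algebra of $A_p$ (by Lemma \ref{compatible}, $A_p=\exp(\lia_p)$). Hence
$$
X^\beta=\{p\in X:\ \beta\in\lia_p\},\qquad X^A=\{p\in X:\ \lia_p=\lia\},
$$
the second equality because $A_p\subseteq A$ forces $\lia_p\subseteq\lia$, so that $\lia\subseteq\lia_p$ is equivalent to $\lia_p=\lia$. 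In particular $X^A\subseteq X^\beta$ for every $\beta\in\lia$, and $X^\beta=X^A$ holds precisely when no point $p$ with $\lia_p\subsetneq\lia$ satisfies $\beta\in\lia_p$. Writing $\mathcal{S}:=\{\lia_p:\ p\in X\}$ for the set of isotropy subalgebras, this says
$$
\{\beta\in\lia:\ X^\beta=X^A\}=\lia\setminus\bigcup_{\substack{V\in\mathcal{S}\\ V\subsetneq\lia}}V .
$$

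The crux is therefore to show that $\mathcal{S}$ is finite; granting this, every $V\in\mathcal{S}$ with $V\subsetneq\lia$ is a proper subspace, hence closed and nowhere dense, so the complement above is open and dense, which is the assertion. To prove finiteness I would argue locally and use compactness of $X$. Fixing $p\in X$, the local normal form for the gradient map provided by the Linearization Theorem \cite{heinzner-schwarz-stoetzel,sjamaar} models the $A$-action near $p$, on a slice transverse to the orbit $A\cdot p$, by the linear isotropy representation of $A_p$ on the normal space $N_p$. Since each $\beta\in\lia_p\subseteq\liep$ acts through $\beta_X=\operatorname{grad}\mu_\lia^\beta$, whose linearisation at $p$ is the (symmetric) Hessian of $\mu_\lia^\beta$, the commuting family $\lia_p$ is simultaneously diagonalisable and $N_p$ splits into finitely many real weight spaces $(N_p)_\chi$, $\chi\in\lia_p^*$. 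The isotropy subalgebra of a slice point $q=\sum_\chi q_\chi$ is then $\bigcap_{q_\chi\neq 0}\ker\chi$; as there are finitely many weights, only finitely many such intersections occur, so only finitely many subalgebras arise from points near $p$. Covering the compact $X$ by finitely many such slice neighbourhoods yields the finiteness of $\mathcal{S}$.

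I expect the main obstacle to be precisely this local step, namely securing a genuine linear normal form for the isotropy action of the noncompact abelian group $A=\exp(\lia)$: as $A$ is a vector group rather than a compact torus, the compact slice theorem cannot be quoted directly, and one must instead read the weight decomposition off the Morse--Bott/linearisation data for $\mu_\lia^\beta$ and check that $A_p$ acts on $N_p$ through real characters. Once the finitely many weights are in hand the counting of kernels is purely formal, and the density conclusion follows from the subspace-union fact recorded above. (Alternatively, the finiteness of $\mathcal{S}$ may be extracted from the convex-polytope structure of the gradient image $\mu_\lia(X)$, whose finitely many faces index the isotropy subalgebras.)
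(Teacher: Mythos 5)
Your proof is correct, but it takes a genuinely different route from the paper's. The paper argues by perturbation of a basis: the engine is Lemma \ref{linearization}, which shows $X^{\beta+\epsilon\alpha}=X^\beta\cap X^\alpha$ for all small $\epsilon>0$ by linearizing at a common zero via Corollary \ref{slice-cor-2}, simultaneously diagonalizing the two commuting symmetric matrices to compare tangent spaces, and then identifying connected components by compactness; iterating this over a basis $\alpha_1,\ldots,\alpha_n$ and completing an arbitrary nonzero $\alpha$ to a basis yields density. You instead describe the set in question as the complement in $\lia$ of the union of the proper isotropy subalgebras $\lia_p$ (this dictionary is legitimate: $\beta_X(p)=0$ iff $\beta\in\lia_p$, and $\lia_p=\{\beta\in\lia:\,\beta_X(p)=0\}$ is a linear subspace, by Lemma \ref{compatible}), and you reduce the theorem to the finiteness of $\{\lia_p:\,p\in X\}$, obtained from a weight decomposition of the slice representation together with compactness of $X$. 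The obstacle you flag --- a linear model for the noncompact group $A$ at an arbitrary, not necessarily fixed, point --- is in fact supplied by the paper: since $A$ is commutative, $A=A^{\mu_\lia(p)}$, so Corollary \ref{slice-cor} (with the shifted gradient map) applies at every $p\in X$, and the infinitesimal action of $\lia_p$ on the slice is by the commuting Hessians of the functions $\mup^\beta$ at the critical point $p$, hence simultaneously diagonalizable over $\R$; only finitely many intersections of weight kernels can then occur near each point, and a finite cover of $X$ finishes the count. Your route buys a slightly stronger conclusion --- the set is open and dense, being the complement of a finite union of proper subspaces --- and isolates the structural reason for density; the paper's route produces an explicit $\delta$ and, more importantly, its Lemma \ref{linearization} is reused as the key ingredient of the second main theorem on composing the gradient flows $\phi_\infty^{\alpha_i}$, which your argument does not yield.
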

Hence  $X^A$ is the set of the singularities  of a vector field $\beta_X$ for some $\beta \in \lia$ and so  the critical  points of the Morse-Bott function $\mup^\beta$. 

We point out that $X^A$ contains much information of the geometry of both the $A$-gradient map and the $G$-gradient map. Indeed, for any $x\in X$, $\mu_\lia(A\cdot x)$  is an open convex subset of $\mu_\lia (x)+\lia_x$ and $\overline{ \mu_\lia (A\cdot x) }=\mathrm{conv} (\mu_\lia (X^A \cap \overline{A\cdot x)})$, see \cite{atiyah,bg,hs}, where $\mathrm{conv}(\cdot) $ denotes the convex hull of $(\cdot).$ In particular $\mu_\lia (X^A)$ is a finte set and $\mathrm{conv} (\mu_\lia (X))=\mathrm{conv} (\mu_\lia (X^A))$ and so a polytope.  Moreover, if $\lia \subset \liep$ is a maximal Abelian subalgebra, then $\mathrm{conv} (\mup(X))$ is given by $K\mathrm{conv} ( \mu_\lia (X))$ \cite{bgi}.

The second main result proves that the existence of $\beta \in \lia$ such that the limit map associated with the gradient flow of $\mup^\beta$ defines a map from $X$ onto $X^A$. Hence, the set $X^A$ is the image of the gradient flow of the Morse-Bott function $\mup^\beta$ for some $\beta \in \lia$.

Let $\alpha_1,\ldots,\alpha_n\in \lia$ be a basis of $\lia$. Then  $\phi_\infty^{\alpha_n}\circ \cdots \circ \phi_\infty^{\alpha_1}$ defines a map from  the manifold $X$ onto   $X^{\alpha_1} \cap \cdots \cap X^{\alpha_n}=X^A$.
\begin{theorem}
Let $\alpha_1,\ldots,\alpha_n\in \lia$ be a basis of $\lia$. There exists $\delta>0$ such that for any $0<\epsilon_2,\ldots,\epsilon_n  < \delta$ we have
$
\phi_\infty^{\alpha_1+\epsilon_2 \alpha_2 +\cdots + \epsilon_n \alpha_n}=\phi_\infty^{\alpha_n}\circ \cdots \circ \phi_\infty^{\alpha_1}.
$
\end{theorem}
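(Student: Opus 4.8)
The plan is to argue by induction on $n=\dim\lia$, peeling off the dominant direction $\alpha_1$ by means of the Morse--Bott stratification of $\mup^{\alpha_1}$ and then invoking the inductive hypothesis on each critical component. For $n=1$ the assertion is the tautology $\phi_\infty^{\alpha_1}=\phi_\infty^{\alpha_1}$, so I assume the result for abelian subalgebras of dimension $<n$ and write $\beta=\alpha_1+\gamma$ with $\gamma=\epsilon_2\alpha_2+\cdots+\epsilon_n\alpha_n\in\lia'$, where $\lia'=\spam(\alpha_2,\ldots,\alpha_n)$ and $A'=\exp(\lia')$. Throughout I use that $\beta_X=(\alpha_1)_X+\gamma_X$ splits into a fast field $(\alpha_1)_X$ and a slow field $\gamma_X$ of size $O(|\gamma|)$.

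First I would record the data supplied by the Linearization Theorem: writing $C_1,\ldots,C_k$ for the connected components of $X^{\alpha_1}$ and $W_i$ for the unstable manifold of $C_i$, one has $X=\bigsqcup_i W_i$ together with a smooth limit map $\phi_\infty^{\alpha_1}\colon W_i\to C_i$. Since $A$ is abelian, each $\exp(t\alpha_j)$ commutes with $\exp(s\alpha_1)$ and therefore preserves $X^{\alpha_1}$; by connectedness each $C_i$ is $A$-invariant, hence $A'$-invariant, and $\mu_{\lia'}$ restricts to a gradient map on the compact submanifold $C_i$. Thus $(C_i,\mu_{\lia'}|_{C_i})$ is again a system of the type considered, and the inductive hypothesis applies to it with the basis $\alpha_2,\ldots,\alpha_n$ of $\lia'$: below a suitable threshold one has
$$
\phi_\infty^{\gamma}\big|_{C_i}=\phi_\infty^{\alpha_n}\circ\cdots\circ\phi_\infty^{\alpha_2}\big|_{C_i},
$$
where I have used the time rescaling $\phi_\infty^{\gamma}=\phi_\infty^{\gamma/\epsilon_2}$, which turns the smallness of $\gamma$ into smallness of the ratios $\epsilon_3/\epsilon_2,\ldots,\epsilon_n/\epsilon_2$.

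The heart of the argument is then to prove, for $x\in W_i$, the factorization
$$
\phi_\infty^{\beta}(x)=\phi_\infty^{\gamma}\big|_{C_i}\big(\phi_\infty^{\alpha_1}(x)\big);
$$
composing this with the previous display and with $\phi_\infty^{\alpha_1}$ yields the theorem. Here I would use $\mup^{\alpha_1}$ as a Lyapunov function for the $\beta$-flow: off a small tubular neighborhood $N_i$ of $C_i$ one computes $\tfrac{d}{dt}\mup^{\alpha_1}=|(\alpha_1)_X|^2+((\alpha_1)_X,\gamma_X)>0$ once $|\gamma|$ is small, so the trajectory is driven into $N_i$. The normal hyperbolicity of $C_i$ for the $\alpha_1$-flow, i.e.\ nondegeneracy of the transverse Hessian of $\mup^{\alpha_1}$ guaranteed by the Morse--Bott property, then furnishes an exponential transverse contraction that dominates the $O(|\gamma|)$ drift; hence the trajectory remains in $N_i$ and shadows the slow flow of $\gamma|_{C_i}$. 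Since the transverse coordinate is slaved to $C_i$ while the tangential motion converges to $\phi_\infty^{\gamma}|_{C_i}$ of the entry point, and the entry point differs from $\phi_\infty^{\alpha_1}(x)$ only by an $O(|\gamma|)$ amount absorbed by continuity of $\phi_\infty^{\gamma}|_{C_i}$ along $\gamma$-flow lines, the factorization follows.

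I expect the only delicate point to be the control in this last step near the boundary of the stratum $W_i$: a priori the slow drift $\gamma$ could carry a trajectory out of $W_i$ into a higher stratum $W_j$ with $\mup^{\alpha_1}(C_j)>\mup^{\alpha_1}(C_i)$, which would change the limit. Ruling this out is precisely what the choice of $\delta$ must achieve, namely that the perturbation never reverses the ordering of the critical values $c_1>\cdots>c_k$ along a trajectory; compactness of $X$ together with the Morse--Bott normal form makes the required exponential-contraction estimates uniform, so a single threshold suffices at each stage of the induction. Because the same tracking mechanism must then be iterated inside each $C_i$, the admissible smallness is naturally produced in a nested fashion in $\epsilon_2,\ldots,\epsilon_n$ (first $\epsilon_2$, then $\epsilon_3$ relative to $\epsilon_2$, and so on), which is the form in which the threshold $\delta$ is to be extracted. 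The stratum-tracking estimate, rather than the algebraic bookkeeping of the limits, is the genuine obstacle.
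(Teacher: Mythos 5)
Your overall architecture is the same as the paper's: both arguments reduce the theorem to the two-term factorization $\phi_\infty^{\beta+\epsilon\alpha}=\phi_\infty^{\alpha}\circ\phi_\infty^{\beta}$ for commuting $\alpha,\beta$ and small $\epsilon$, and then iterate over the basis with a compactness/uniformity step. The difference is that the paper imports that factorization as Lemma \ref{linearization2} from \cite{bw} and \cite{BT}, whereas you try to prove it from scratch by a fast--slow tracking argument, and that is where the proposal has a genuine gap. Your Lyapunov computation only shows that $\mup^{\alpha_1}$ increases along the $\beta$-trajectory outside a neighborhood of the \emph{whole} fixed set $X^{\alpha_1}$; it does not drive the trajectory directly into a tubular neighborhood $N_i$ of the correct component $C_i$. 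For $x$ near the boundary of the stratum $W_i$ the unperturbed trajectory lingers arbitrarily long near intermediate critical components, so the comparison between the $\alpha_1$-flow and the $\beta$-flow degrades like $|\gamma|\,e^{L\,T(x)}$ with $T(x)$ unbounded on $W_i$. Consequently your claim that the entry point into $N_i$ differs from $\phi_\infty^{\alpha_1}(x)$ by $O(|\gamma|)$ is not uniform, and the residual discrepancy is not tangent to the $\gamma$-orbit, so it cannot be absorbed by ``continuity of $\phi_\infty^{\gamma}|_{C_i}$ along $\gamma$-flow lines'': $\phi_\infty^{\gamma}|_{C_i}$ is itself a discontinuous, stratumwise-defined limit map. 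This multi-pass behaviour near intermediate critical sets is precisely the content of Lemma \ref{linearization2}; you name the difficulty but do not close it.

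A second, independent problem is the quantifier structure. After the rescaling $\phi_\infty^{\gamma}=\phi_\infty^{\gamma/\epsilon_2}$ your induction delivers conditions on the ratios $\epsilon_3/\epsilon_2,\dots,\epsilon_n/\epsilon_{n-1}$, i.e.\ a nested smallness condition, and you assert that this ``is the form in which the threshold $\delta$ is to be extracted.'' That is not the statement being proved, which asks for a single $\delta$ with all $\epsilon_i<\delta$ varying independently, and the two are genuinely different. For instance, for a linear $T^3$-action on $\PP^2$ with characters $(1,0,1)$ and $(1,1,0)$ on the last two homogeneous coordinates and $\alpha_j$ the standard basis of $\lia\cong\R^3$, one computes $\phi_\infty^{\alpha_3}\circ\phi_\infty^{\alpha_2}\circ\phi_\infty^{\alpha_1}([0:1:1])=[0:0:1]$, while $\phi_\infty^{\alpha_1+\epsilon_2\alpha_2+\epsilon_3\alpha_3}([0:1:1])=[0:1:0]$ whenever $\epsilon_3>\epsilon_2$, however small both are. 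So the nested version is the one your scheme can hope to establish (and, arguably, the one the iteration of Lemma \ref{linearization2} actually yields); you should either prove the uniform independent version by a different mechanism or state explicitly that you are proving the nested variant, rather than presenting the latter as the former.
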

\section{Proof of the main results} \label{ghigi}
Suppose $X\subset Z$ is a $G$-invariant compact connected real submanifold of $Z$ with the gradient map $\mu_\mathfrak{p} : X\to \mathfrak{p}.$ If $x\in X,$  then $G_x=\{h\in G:\, g\cdot x=x\}$ denotes the stabilizer of $G$ at $x$.  If $G_x$ acts on a manifold $S$, then  $G \times^{G_x}S$ denotes the associated bundle with principal
bundle $G \ra G/G_x$ defined as the quotient of
$G \times S$ by the $G_x$-action $h  (g, s) = (gh^{-1}, h · s)$. We recall the Slice Theorem; see \cite{heinzner-schwarz-stoetzel} for details.
\begin{theorem}\label{line}[Slice Theorem \protect{\cite[Thm. 3.1]{heinzner-schwarz-stoetzel}}, \cite{sjamaar}]
If $x \in X$ and $\mup(x) = 0$, there are a $G_x$-invariant
  decomposition $T_x X = \lieg \cd x \oplus W$, open $G_x$-invariant
  subsets $S \subset W$, $\Omega \subset X$ and a $G$-equivariant
  diffeomorphism $\Psi : G \times^{G_x}S \ra \Omega$, such that $0\in
  S, x\in \Omega$ and $\Psi ([e, 0]) =x$.
\end{theorem}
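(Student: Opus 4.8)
The plan is to prove the statement in three stages: record the reductive structure of the isotropy group $G_x$, produce a $G_x$-invariant splitting of $T_xX$, and then integrate this infinitesimal model to an equivariant tube. The structural input that makes everything work, and the only place where the hypothesis $\mup(x)=0$ is genuinely used (as opposed to $x$ being merely a critical point of a single $\mup^\beta$), is that $G\cdot x$ is a closed, hence embedded, orbit and that $G_x$ is itself a compatible subgroup: $G_x=K_x\exp(\liep_x)$ with $K_x=G_x\cap K$, $\liep_x=\lieg_x\cap\liep$, and Cartan decomposition $\lieg_x=\liek_x\oplus\liep_x$. I would take this from the gradient-map theory of \cite{heinzner-schwarz-stoetzel}; in particular $T_x(G\cdot x)=\lieg\cdot x$ and $\lieg\cdot x$ is a genuine subspace of $T_xX$.

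The decomposition is then the clean part. I would simply set $W:=(\lieg\cdot x)^{\perp}$, the orthogonal complement with respect to the $K$-invariant metric $(\cdot,\cdot)$, so that $T_xX=\lieg\cdot x\oplus W$ holds automatically and the only content is that $W$ is $G_x$-invariant. For $\xi\in\lieg_x$ the linearized isotropy operator $A_\xi:=d(\xi_X)_x\in\End(T_xX)$ is well defined and preserves $\lieg\cdot x$, since $G_x$ carries the orbit to itself (infinitesimally $A_\xi(\zeta_X(x))=[\xi,\zeta]_X(x)$). The key observation is a self-adjoint/skew-adjoint dichotomy along the Cartan decomposition: if $\xi=\eta\in\liek_x$ then $\eta_X$ is a Killing field for $(\cdot,\cdot)$, so $A_\eta$ is skew-adjoint; if $\xi=\beta\in\liep_x$ then $\beta_X=\operatorname{grad}\mup^\beta$, so $A_\beta=\operatorname{Hess}_x\mup^\beta$ is self-adjoint. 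In either case an operator that is self- or skew-adjoint and preserves a subspace also preserves its orthogonal complement, whence $A_\xi(W)\subseteq W$ for every $\xi\in\lieg_x$. Thus $W$ is $\lieg_x$-invariant; since $K_x$ acts orthogonally preserving $\lieg\cdot x$ and $\exp(\liep_x)$ is connected, $W$ is invariant under all of $G_x=K_x\exp(\liep_x)$.

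For the tube, write $\liek=\liek_x\oplus\liem$ and $\liep=\liep_x\oplus\lier$ for $K_x$-invariant complements, so that $\lieg\cdot x=\liem\cdot x\oplus\lier\cdot x$ and $T_xX=\liem\cdot x\oplus\lier\cdot x\oplus W$. I would take a $G_x$-invariant neighbourhood $S$ of $0$ in $W$ and a $G_x$-equivariant slice embedding $\sigma:S\to X$ with $\sigma(0)=x$ and $d\sigma_0$ the inclusion $W\hookrightarrow T_xX$, and set $\Psi([g,s])=g\cdot\sigma(s)$. Equivariance of $\sigma$ makes $\Psi$ well defined and $G$-equivariant, and its differential at $[e,0]$ sends $\liem\oplus\lier$ isomorphically onto $\lieg\cdot x$ by $\xi\mapsto\xi_X(x)$ and $W$ identically onto $W$, hence is the identity of $T_xX$ under the decomposition above. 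By the inverse function theorem $\Psi$ is a local diffeomorphism at $[e,0]$, and by $G$-equivariance along the whole zero section $G\cdot x$; shrinking $S$ and using the closedness of $G\cdot x$ to obtain properness then upgrades this to a diffeomorphism onto an open $G$-invariant set $\Omega$.

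I expect the genuine obstacle to be the construction of the $G_x$-equivariant $\sigma$. The naive candidate $\sigma=\exp_x|_W$ is only $K_x$-equivariant, because $\exp(\liep_x)$ does not act by isometries and one cannot average over the noncompact factor. I would resolve this by first applying the classical slice theorem for the compact group $K$ at $x$ to obtain a $K$-equivariant tube over $K\cdot x$ with slice $(\liek\cdot x)^{\perp}=\lier\cdot x\oplus W$, and then spreading the $\lier\cdot x$ slice directions into honest $G$-orbit directions through the Cartan diffeomorphism $(k,\beta)\mapsto k\exp(\beta)$ of $K\times\liep$ onto $G$; equivalently, the real statement can be deduced by restricting Sjamaar's holomorphic slice theorem \cite{sjamaar} for the $U^\C$-action on $Z$ to the submanifold $X$ and to the compatible subgroup $G$, as carried out in \cite{heinzner-schwarz-stoetzel}. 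The delicate point is verifying that the resulting $\Psi$ is injective and \'etale not merely at $[e,0]$ but on a full neighbourhood of the orbit, which is exactly where the closedness of $G\cdot x$ secured in the first step is indispensable.
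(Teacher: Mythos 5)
The paper offers no proof of this statement at all: it is recalled verbatim from \cite[Thm.~3.1]{heinzner-schwarz-stoetzel} (see also \cite{sjamaar}) with the remark ``see \cite{heinzner-schwarz-stoetzel} for details'', so your attempt can only be measured against the proof in those sources --- whose architecture you do reproduce faithfully. The parts of your argument that are carried out in full are correct. That $\mup(x)=0$ forces $G\cd x$ to be closed and $G_x$ to be compatible, $\lieg_x=\liek_x\oplus\liep_x$, is indeed the point where the hypothesis enters, and your self-adjoint/skew-adjoint dichotomy is a clean and valid proof that $W=(\lieg\cd x)^\perp$ is $G_x$-invariant: for $\eta\in\liek_x$ the field $\eta_X$ is Killing, so $d(\eta_X)_x$ is skew-adjoint, while for $\beta\in\liep_x$ one has $\beta_X=\operatorname{grad}\mup^\beta$ vanishing at $x$, so $d(\beta_X)_x=\operatorname{Hess}_x\mup^\beta$ is self-adjoint, and either kind of operator preserves the orthogonal complement of an invariant subspace; connectedness of $\exp(\liep_x)$ then upgrades the infinitesimal invariance to invariance under all of $G_x=K_x\exp(\liep_x)$.

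The genuine gap is the one you half-acknowledge: the existence of the $G_x$-equivariant embedding $\sigma:S\to X$ is not a technical lemma feeding into the theorem, it \emph{is} the theorem, and neither of your two suggestions establishes it. The proposal ``compact slice theorem for $K$, then spread the $\lier\cd x$ directions through the Cartan diffeomorphism $(k,\beta)\mapsto k\exp(\beta)$'' is a description of the desired conclusion rather than a construction: the exponential slice at $x$ is only $K_x$-equivariant, the noncompact factor $\exp(\liep_x)$ acts by non-isometries and cannot be averaged over, and nothing in the global Cartan decomposition of $G$ forces your candidate slice to be carried into itself by $\exp(\liep_x)$ --- this is precisely the difficulty the holomorphic methods exist to overcome. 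Your fallback, ``restrict Sjamaar's holomorphic slice theorem to $X$ and $G$ as carried out in \cite{heinzner-schwarz-stoetzel}'', is, as a proof of the present statement, a citation of that statement's own proof; it is legitimate in exactly the sense in which the paper is legitimate (quotation of a known theorem), but it leaves the analytic core --- the complex-analytic local model for the $U^\C$-action on $Z$ at a zero of $\mu$ and its descent to the compatible subgroup $G$ and the invariant submanifold $X$ --- entirely untouched. A smaller overstatement in the same vein: closedness of $G\cd x$ does not by itself deliver the properness needed to make $\Psi$ injective on a full neighborhood of the orbit; in the sources this comes out of the slice construction, not as a separate soft argument. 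So your write-up is an accurate roadmap of the literature, consistent with the paper's own (proofless) treatment, but not a self-contained proof.
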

\begin{corollary} \label{slice-cor} If $x \in X$ and $\mup(x) = \beta$,
  there are a $G^\beta$-invariant decomposition $T_x X = \lieg^\beta
  \cd x \, \oplus W$, open $G^\beta$-invariant subsets $S \subset W$,
  $\Omega \subset X$ and a $G^\beta$-equivariant diffeomorphism $\Psi
  : G^\beta \times^{G_x}S \ra \Omega$, such that $0\in S, x\in \Omega$
  and $\Psi ([e, 0]) =x$.
\end{corollary}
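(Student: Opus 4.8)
The plan is to deduce the corollary from the Slice Theorem (Theorem~\ref{line}), which treats the case $\mup(x)=0$, by replacing $G$ with the centralizer of $\beta$ and then subtracting the constant $\beta$ from the associated gradient map so that the shifted map vanishes at $x$. Accordingly, I would first set $G^\beta:=\{g\in G:\ \Ad(g)\beta=\beta\}$, the centralizer of $\beta$ in $G$. Since $\beta\in\liep$, the subgroup $G^\beta$ is again compatible, with Cartan decomposition $G^\beta=K^\beta\exp(\liep^\beta)$, where $K^\beta=K\cap G^\beta$ and $\liep^\beta=\{v\in\liep:\ [v,\beta]=0\}=\liep\cap\lieg^\beta$, so that its Lie algebra is $\lieg^\beta=\liek^\beta\oplus\liep^\beta$. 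The gradient map of the restricted $G^\beta$-action on $X$ is $\mu_{\liep^\beta}:=\pi_{\liep^\beta}\circ\mup$, where $\pi_{\liep^\beta}:\liep\ra\liep^\beta$ is the orthogonal projection; this is the exact analogue, for $G^\beta$, of the map $\mu_\lia=\pi_\lia\circ\mup$ used in the Introduction.

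Next I would carry out the shift. Because $\beta$ commutes with every element of $\lieg^\beta$, it lies in the centre of $\lieg^\beta$; in particular $\beta\in\liep^\beta$ and $\Ad(k)\beta=\beta$ for all $k\in K^\beta$. Hence neither defining property of a gradient map is disturbed by subtracting the constant $\beta$: the map $\nu:=\mu_{\liep^\beta}-\beta$ is still $K^\beta$-equivariant, and $\operatorname{grad}\langle\nu,\xi\rangle=\operatorname{grad}\langle\mu_{\liep^\beta},\xi\rangle=\xi_X$ for every $\xi\in\liep^\beta$, so $\nu$ is a genuine $G^\beta$-gradient map. Moreover $\pi_{\liep^\beta}(\beta)=\beta$ since $\beta\in\liep^\beta$, and therefore $\nu(x)=\pi_{\liep^\beta}(\mup(x))-\beta=\beta-\beta=0$.

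Finally I would apply the Slice Theorem (Theorem~\ref{line}) to the compatible group $G^\beta$ acting on $X$ with gradient map $\nu$, at the point $x$ where $\nu(x)=0$. Writing $G_x$ for the stabilizer of $x$ in the acting group $G^\beta$, that is $G^\beta\cap G_x$, the theorem furnishes a $G_x$-invariant splitting $T_xX=\lieg^\beta\cd x\oplus W$, open $G_x$-invariant subsets $S\subset W$ and $\Omega\subset X$, and a $G^\beta$-equivariant diffeomorphism $\Psi:G^\beta\times^{G_x}S\ra\Omega$ with $0\in S$, $x\in\Omega$ and $\Psi([e,0])=x$. This is exactly the asserted normal form.

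The only point that requires genuine care is the legitimacy of the shift: one must verify that $G^\beta$ is compatible and that subtracting the $\Ad(K^\beta)$-fixed vector $\beta$ yields a bona fide $G^\beta$-gradient map vanishing at $x$. (One should also keep in mind that the stabilizer entering the associated bundle is the stabilizer of $x$ inside the acting group $G^\beta$, which may be strictly smaller than the stabilizer of $x$ in $G$.) Once these points are settled, the corollary is an immediate transcription of the unshifted Slice Theorem applied to $G^\beta$.
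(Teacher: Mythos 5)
Your proposal is correct and follows essentially the same route as the paper: restrict to the compatible centralizer $G^\beta$, shift its gradient map $\mu_{\liep^\beta}$ by the central element $\beta$ so that it vanishes at $x$, and apply Theorem~\ref{line}. The only cosmetic difference is that you verify the legitimacy of the shift directly ($K^\beta$-equivariance and the gradient condition), whereas the paper phrases it via the shifted momentum map of $U^{\mathbf{i}\beta}$ on $Z$; the substance is identical.
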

This follows applying the previous theorem to the action of $G^\beta$ on $X$. Indeed, it is well known that $G^\beta=K^\beta\exp(\liep^\beta)$ is compatible \cite{borel-ji-libro} and the orthogonal projection of $\textbf{i} \mu$ onto $\liep^\beta$ is the $G^\beta$-gradient map $\mu_{\liep^\beta}$. The group $G^\beta$ is also compatible with the Cartan decomposition of $(U^\C )^{\beta}=(U^\C )^{\textbf{i} \beta}=(U^{\textbf{i}\beta} )^\C$ and $\textbf{i} \beta$ is fixed by the $U^{\textbf{i}\beta}$-action on $\liu^{\textbf{i}\beta}$. This implies that $\widehat{\mu_{\liu^{\textbf{i} \beta}}}:Z \lra \liu^{\textbf{i} \beta}$ is
given by $\widehat{\mu_{\liu^{\textbf{i}\beta}}(z)}=\pi_{\liu^{\textbf{i}\beta}} \circ \mu+\textbf{i}\beta$, where $\pi_{\liu^{\textbf{i}\beta}}$ is the orthogonal projection of $\liu$ onto  $\liu^{\textbf{i}\beta}$, is the $U^{\textbf{i}\beta}$-shifted momentum map. The associated $G^{\beta}$-gradient map is given by $\widehat{\mu_{\liep^\beta}} := \mu_{\liep^\beta} -
\beta$. Hence, if $G$ is commutative, then we have a Slice Theorem for $G$ at every point of $X$,
see \cite[p.$169$]{heinzner-schwarz-stoetzel} and \cite{sjamaar} for more details.

If $\beta \in \liep$, then $\beta_X$ is a vector field on
$X$, i.e. a section of the bundle $TX$. For $x\in X$, the differential is a map
$T_x X \ra T_{\beta_X (x)}(TX)$. If $\beta_X (x) =0$, there is a canonical splitting $T_{\beta_X (x)}(TX) = T_x X \oplus
T_x X$. Accordingly the differential of $\beta_X$, regarded as a section of $TX$, splits into a horizontal and a vertical part. The horizontal part is the identity map. We denote the
vertical part by $\mathrm d \beta_X  (x)$. The linear map $\mathrm d \beta_X  (x)\in \End(T_x X)$ is indeed the so-called intrinsic differential of $\beta_X$, regarded as a section in the tangent bundle $TX$, at the vanishing point $x$. Let
$\{\phi_t=\exp(t\beta)\} $ be the flow of $\beta_X$.  There
  is a corresponding flow on $TX$. Since $\phi_t(x)=x$, the flow on
  $TX$ preserves $T_x X$ and there it is given by $d\phi_t(x) \in
  \Gl(T_x X)$.  Thus we get a linear $\R$-action on $T_x X$ with
  infinitesimal generator $d\beta_X  (x) $.
\begin{corollary}\label{slice-cor-2}
  If $\beta \in \liep $ and $x \in X$ is a critical point of $\mupb$,
  then there are open invariant neighborhoods $S \subset T_x X$ and
  $\Omega \subset X$ and an $\R$-equivariant diffeomorphism $\Psi : S
  \ra \Omega$, such that $0\in S, x\in \Omega$, $\Psi ( 0) =x$. (Here
  $t\in \R$ acts as $d\phi_t(x)$ on $S$ and as $\phi_t$ on $\Omega$.)
\end{corollary}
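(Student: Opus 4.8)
The plan is to reduce the corollary to the Slice Theorem for a suitable reductive subgroup and then to extract the linearization by restricting the slice model to the one--parameter group generated by $\beta$. First I would note that, since $\operatorname{grad}\mupb=\beta_X$, the point $x$ is a critical point of $\mupb$ exactly when $\beta_X(x)=0$, i.e. when $\exp(t\beta)x=x$ for all $t$; thus $\phi_t=\exp(t\beta)$ fixes $x$ and $\beta\in\liep\cap\lieg_x=:\liep_x$. Put $\gamma:=\mup(x)$ and apply Corollary~\ref{slice-cor} at $x$ with distinguished element $\gamma$ (the hypothesis $\mup(x)=\gamma$ being automatic): this yields a $G^\gamma$-invariant splitting $T_xX=\lieg^\gamma\cd x\oplus W$, open invariant sets $S_0\subset W$, $\Omega_0\subset X$, and a $G^\gamma$-equivariant diffeomorphism $\Psi_0:G^\gamma\times^{G_x}S_0\ra\Omega_0$ with $\Psi_0([e,0])=x$. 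For this model to be defined one needs $G_x\subset G^\gamma$, equivalently $[\beta',\gamma]=0$ for all $\beta'\in\liep_x$; this consistency condition is built into Corollary~\ref{slice-cor}, and it can also be seen directly by passing to $Z$, where $\beta_X(x)=0$ forces $x$ to be fixed by the compact one--parameter subgroup of $U$ generated by $\text{i}\beta$, so that $U$-equivariance of $\mu$ gives $[\text{i}\beta,\mu(x)]=0$ and projection onto $\liep$ gives $[\beta,\gamma]=0$. In particular $\beta\in\liep_x\subset\lieg^\gamma$, hence $\exp(\R\beta)\subset G_x\subset G^\gamma$.

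Next I would set up the linear model on $T_xX$. Since $\beta\in\liep$, the operator $\ad\beta$ is self--adjoint for the invariant scalar product and preserves $\lieg_x$ and $\lieg^\gamma$, so I may take $W$ (which is $G_x$-, hence $\exp(\R\beta)$-, invariant) together with an $\ad\beta$-invariant complement $\liem$ of $\lieg_x$ in $\lieg^\gamma$, identifying $\lieg^\gamma\cd x\cong\liem$ by $\xi\mapsto\xi_X(x)$. Using $\liem$ I would define a chart $\Theta$ from a neighborhood of $0$ in $T_xX=\liem\oplus W$ into $G^\gamma\times^{G_x}S_0$ by $\Theta(\xi_X(x)+w)=[\exp\xi,w]$, and set $\Psi:=\Psi_0\circ\Theta$; after shrinking to invariant neighborhoods this is a diffeomorphism of an invariant neighborhood $S$ of $0$ in $T_xX$ onto an invariant neighborhood $\Omega$ of $x$ with $\Psi(0)=x$.

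Finally I would check the equivariance. Under $\Psi_0$ the flow $\phi_t$ becomes left translation $[g,s]\mapsto[\exp(t\beta)g,s]$; transporting through $\Theta$, the relation $\exp(t\beta)\exp\xi=\exp(\Ad(\exp(t\beta))\xi)\exp(t\beta)$ together with $\exp(t\beta)\in G_x$ and the identity $[g,s]=[gh^{-1},h\cd s]$ for $h\in G_x$ gives $[\exp(t\beta)\exp\xi,w]=[\exp(\Ad(\exp(t\beta))\xi),\exp(t\beta)\cd w]$, where $\ad\beta$-invariance of $\liem$ keeps $\Ad(\exp(t\beta))\xi\in\liem$. Thus the transported $\R$-action is the linear map $(\xi,w)\mapsto(\Ad(\exp(t\beta))\xi,\exp(t\beta)\cd w)$, which under $\liem\oplus W\cong T_xX$ equals $\Ad(\exp(t\beta))\oplus(\text{isotropy action})=d\phi_t(x)$ --- the first summand because $d\phi_t(x)\xi_X(x)=(\Ad(\exp(t\beta))\xi)_X(x)$, the second because $d\phi_t(x)\restr{W}$ is the linearized $G_x$-action. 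Hence $\Psi$ is the desired $\R$-equivariant diffeomorphism. I expect the crux to be exactly this last identity: linearizing a noncompact $\R$-flow near a fixed point is impossible in general, and the whole point is that the slice model already displays $\phi_t$ in linear form --- by the slice representation normally and by $\Ad$ along the orbit --- so that the self--adjointness of $\ad\beta$, which supplies the invariant complement $\liem$, is all the extra input required.
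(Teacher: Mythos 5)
Your route --- the $G^{\gamma}$-slice model of Corollary \ref{slice-cor} followed by an exponential chart --- is not the paper's, and it has a genuine gap at the last step. What the corollary asserts is an $\R$-\emph{invariant} open neighborhood $S$ of $0$ in $T_xX$ carried diffeomorphically onto an invariant neighborhood of $x$, and here ``after shrinking to invariant neighborhoods'' cannot work as stated: the infinitesimal generator $d\beta_X(x)$ is the Hessian of the Morse--Bott function $\mupb$ at $x$, hence symmetric with real eigenvalues, so whenever it has a nonzero eigenvalue $\lambda$ every invariant open set containing $0$ contains entire unbounded rays $\{e^{\lambda t}v:\ t\in\R\}$ in the expanding directions. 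There are no small invariant neighborhoods to shrink to; you would instead have to prove that your chart $\Theta(\xi_X(x)+w)=[\exp \xi,w]$ is an open embedding on such an unbounded invariant set, and you give no argument for this. (Already for $X=Z=\PP^1$, $\beta=\mathrm{diag}(1,-1)$, $x=[1:0]$, the invariant $\Omega$ is forced to be the entire unstable manifold $\PP^1-\{[0:1]\}$.) Producing invariant neighborhoods for a \emph{non-compact} group is exactly the hard content of the Heinzner--Schwarz--St\"otzel slice theorem, and the paper does not re-derive it: since $\exp|_{\liep}$ is a diffeomorphism onto its image, $H:=\exp(\R\beta)$ is closed, hence a compatible subgroup, and one applies Corollary \ref{slice-cor} directly to the $H$-action at $x$. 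Because $\beta_X(x)=0$ one has $H_x=H$ and $\lieh\cdot x=0$, so the bundle $H\times^{H_x}S$ collapses to $S\subset W=T_xX$ and the $H$-equivariant diffeomorphism $\Psi\colon S\to\Omega$ \emph{is} the assertion, invariant neighborhoods included, with no further construction.

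A secondary error: you assert $G_x\subset G^{\gamma}$ and say this is ``built into Corollary \ref{slice-cor}.'' It is false in general (in the $\PP^1$ example $G_x$ is a Borel subgroup while $G^{\gamma}$ is the diagonal torus); the isotropy group in that corollary must be read as $(G^{\gamma})_x$, and your argument via $[\text{i}\beta,\mu(x)]=0$ only yields $\beta\in\lieg^{\gamma}$, not $\lieg_x\subset\lieg^{\gamma}$. This part is repairable by replacing $G_x$ with $(G^{\gamma})_x$ throughout your construction; the invariant-neighborhood gap above is the substantive one.
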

\begin{proof}
 Since $\exp:\liep \lra G$ is a diffeomorphism onto the image, the subgroup $H:=\exp(\R \beta)$ is closed and so it is compatible.  Hence, it is enough to apply the previous corollary to the $H$-action on $X$ and the value at  $x$ of the corresponding gradient map.
\end{proof}
\begin{lemma}\label{compatible}
Let $\lia\subset \liep$ be an Abelian subalgebra and let $A=\exp(\lia)$. If $x\in X$, then $A_x$ is compatible, i.e., $A_x=\exp(\lia_x)$
\end{lemma}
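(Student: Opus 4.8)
The plan is to exploit the monotonicity of the Morse--Bott function $\mupb$ along the flow of $\beta_X$ in order to show that $A_x$ carries no discrete part. First I would record the elementary structural fact that $\exp\colon\lia\to A$ is a bijection. Since $\lia\subset\liep$ and the Cartan decomposition $K\times\liep\to G$, $(k,\beta)\mapsto k\exp(\beta)$, is a diffeomorphism, its restriction to $\{e\}\times\liep$ shows that $\exp|_{\liep}$ is injective; restricting once more to $\lia$ and using that $\lia$ is abelian, $\exp\colon(\lia,+)\to A$ is an isomorphism of Lie groups, so $A\cong\R^n$ as a vector group. In particular every $a\in A$ equals $\exp(\beta)$ for a \emph{unique} $\beta\in\lia$, and the Lie algebra of the closed subgroup $A_x$ is exactly $\lia_x=\{\beta\in\lia:\beta_X(x)=0\}$; consequently $\exp(\lia_x)\subseteq A_x$ is automatic, and the whole content of the lemma is the reverse inclusion.

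For that reverse inclusion I would fix $a\in A_x$ and write $a=\exp(\beta)$. Consider the integral curve $c(t)=\exp(t\beta)x$, which stays in $X$ by $G$-invariance and has velocity $\dot c(t)=\beta_X(c(t))$. Using $\mathrm{grad}\,\mupb=\beta_X$ I would compute
\[
\frac{d}{dt}\,\mupb(c(t)) = \bigl(\beta_X(c(t)),\beta_X(c(t))\bigr) \geq 0,
\]
so that $t\mapsto\mupb(c(t))$ is non-decreasing. Because $a\in A_x$ we have $c(1)=ax=x=c(0)$, hence $\mupb(c(1))=\mupb(c(0))$; a non-decreasing function with equal endpoint values is constant on $[0,1]$, which forces its derivative to vanish identically there, i.e. $\beta_X(c(t))=0$ for all $t\in[0,1]$. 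Evaluating at $t=0$ gives $\beta_X(x)=0$, that is $\beta\in\lia_x$ and $a=\exp(\beta)\in\exp(\lia_x)$. Combining the two inclusions yields $A_x=\exp(\lia_x)$, which is precisely the asserted compatibility.

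I expect the substantive point to be the exclusion of a discrete part rather than any hard computation. Since $A\cong\R^n$, an arbitrary closed subgroup could in principle be of the form $\R^k\times\Zeta^m$, and a genuine $\Zeta^m$-factor would produce elements $\exp(\beta)\in A_x$ with $\beta\notin\lia_x$, breaking connectedness and hence compatibility. The monotonicity argument above is exactly what rules this out: the function $\mupb$ strictly increases along any non-constant $\beta$-orbit, so an orbit cannot return to its starting point unless $\beta_X(x)=0$. This is the natural gradient-flow mechanism underlying the Morse--Bott picture of the paper, so no sharper tool is needed; the only prerequisite, the injectivity of $\exp$ on $\lia$, is already supplied by the compatibility of $G$.
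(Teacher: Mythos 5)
Your argument is correct and coincides with the paper's own proof: both consider the function $t\mapsto\langle\mu_\liep(\exp(t\beta)x),\beta\rangle$ (the paper uses $\mu_\lia$, which agrees since $\beta\in\lia$), observe that it is non-decreasing with derivative $\|\beta_X(\exp(t\beta)x)\|^2$ and equal values at $t=0$ and $t=1$, and conclude $\beta_X(x)=0$. Your additional remarks on the injectivity of $\exp|_{\liep}$ and the exclusion of a discrete factor are a harmless elaboration of the same mechanism.
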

\begin{proof}
If $a\in A_x$, then $a=\exp(\beta)$ for a $\beta \in \lia$. Let
$
f(t)=\langle \mu_\lia (\exp(t\beta)x), \beta \rangle
$.  Then $f(1)= \langle \mu_\lia (\exp(\beta)x), \beta \rangle = \langle \mu_\lia (ax), \beta \rangle = \langle \mu_\lia (x), \beta \rangle = f(0)$ and $f'(t)=\parallel \beta_X (\exp(t\beta)x)\parallel^2\geq0 $. This implies $\beta_X (x)=0$ and so $\beta \in \liea_x$, proving  $A_x = \exp(\liea_x)$.
\end{proof}
Let  $\alpha,\beta \in \liep$ be such that $[\alpha,\beta]=0$ and let $\lia$ be the vector space in $\liep$ generated by $\alpha$ and $\beta$. By the above Lemma,  it follows that
$
X^A=X^\beta \cap X^\beta,$ where $A=\exp(\lia).$
\begin{lemma}\label{linearization}
Let $\beta, \alpha\in \liep$ be such that $[\beta,\alpha] = 0.$ There exists $\delta > 0$ such that for any $\epsilon\in (0, \delta)$ $X^{\beta + \epsilon\alpha} = X^\beta \cap X^\alpha.$
\end{lemma}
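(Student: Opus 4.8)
The plan is to establish the two inclusions separately. The inclusion $X^\beta\cap X^\alpha\subseteq X^{\beta+\epsilon\alpha}$ is immediate and holds for every $\epsilon$: if $\beta_X(x)=\alpha_X(x)=0$, then $(\beta+\epsilon\alpha)_X(x)=\beta_X(x)+\epsilon\alpha_X(x)=0$. All the content lies in the reverse inclusion for small $\epsilon$, and the first thing I would record is the consequence of the commutativity hypothesis. Since $[\alpha,\beta]=0$ we get $[\alpha_X,\beta_X]=0$, so the flows $\exp(t\alpha)$ and $\exp(s\beta)$ commute; hence $\exp(t\alpha)$ preserves $X^\beta=\{\beta_X=0\}$. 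As the flow of $\alpha$ is connected, it preserves each of the finitely many compact connected components $C_1,\dots,C_k$ of $X^\beta$, and therefore $\alpha_X$ is tangent to every $C_i$. In particular, at $y\in C_i$ the vector $\alpha_X(y)$ lies in $T_yC_i$, so its component normal to $C_i$ vanishes. This single fact is the mechanism by which commutativity will eventually do its work.

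Next I would confine the critical set of $\mu_\liep^{\beta+\epsilon\alpha}=\mupb+\epsilon\,\mu_\liep^\alpha$ to a neighbourhood of $X^\beta$ by a compactness argument. Fix disjoint tubular neighbourhoods of the $C_i$ and let $V$ be their union. On the compact set $X\setminus V$ the vector field $\beta_X$ does not vanish, so $\|\beta_X\|\ge c>0$ there, while $\|\alpha_X\|\le M$ on all of $X$. Hence for $\epsilon<c/M$ we have $\|\beta_X+\epsilon\alpha_X\|\ge c-\epsilon M>0$ on $X\setminus V$, so every critical point of $\mu_\liep^{\beta+\epsilon\alpha}$, i.e. every solution of $\beta_X+\epsilon\alpha_X=0$, already lies in $V$.

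The heart of the argument, and the step I expect to be the main obstacle, is the local analysis inside $V$, where uniformity of the error estimates over the compact $C_i$ must be controlled. Near a component $C_i$ I would use the Morse--Bott normal form for $\mupb$ (equivalently the linear model of Corollary \ref{slice-cor-2} applied to $\exp(\R\beta)$): writing a point as $(y,w)$ with $y\in C_i$ and $w$ in the normal space $N_yC_i$, the transverse nondegeneracy of the Hessian says that the intrinsic differential $\mathrm d\beta_X(y)$ restricts to a self-adjoint isomorphism $L_y$ of $N_yC_i$, bounded below uniformly by some $\lambda>0$. Thus the normal component of $\beta_X(y,w)$ is $L_yw+O(\|w\|^2)$, whereas by the first paragraph the normal component of $\alpha_X(y,w)$ is $O(\|w\|)$, since it vanishes along $w=0$. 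Projecting the equation $\beta_X+\epsilon\alpha_X=0$ onto the normal directions gives $\|L_yw\|\le C\|w\|^2+\epsilon C'\|w\|$, hence $\lambda\le C\|w\|+\epsilon C'$ whenever $w\neq0$; shrinking $V$ and $\epsilon$ makes this impossible and forces $w=0$. Therefore every critical point in $V$ lies on $X^\beta$, and there $\beta_X=0$ forces $\epsilon\alpha_X=0$, i.e. $\alpha_X=0$. This yields $X^{\beta+\epsilon\alpha}\subseteq X^\beta\cap X^\alpha$, and together with the trivial inclusion gives equality for all $0<\epsilon<\delta$, with $\delta$ the smaller of $c/M$ and the threshold from the local estimate. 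Note that commutativity enters precisely in making $(\alpha_X)$ tangent to $C_i$, so that its normal part is $O(\|w\|)$ rather than $O(1)$; without it the term $\epsilon(\alpha_X)$ would compete with $L_yw$ at first order and the conclusion would fail.
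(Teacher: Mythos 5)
Your proof is correct, but it takes a genuinely different route from the paper's. The paper works at the fixed points of $A=\exp(\mathrm{span}(\alpha,\beta))$: via the linearization of Corollary \ref{slice-cor-2} it reduces to a pair of commuting symmetric matrices, diagonalizes them simultaneously to get $\mathrm{Ker}(\alpha+\epsilon\beta)=\mathrm{Ker}\,\alpha\cap\mathrm{Ker}\,\beta$ for $\epsilon$ below the minimal ratio $|a_i|/|b_i|$, and concludes that the component $C$ of $X^A$ and the component $C'$ of $X^{\beta+\epsilon\alpha}$ containing it have the same tangent space at the chosen point, hence coincide as compact connected submanifolds; the global input is that every component of $X^{\beta+\epsilon\alpha}$ contains a component of $X^A$, because $X^{\beta+\epsilon\alpha}$ is a compact $A$-invariant submanifold carrying its own $A$-gradient map, and there are only finitely many components over which to take the minimum of the thresholds. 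You instead run a quantitative perturbation argument along all of $X^\beta$: the zeros of $\beta_X+\epsilon\alpha_X$ are confined to a tubular neighborhood of $X^\beta$ by a compactness estimate, and inside that neighborhood the transverse nondegeneracy of the Morse--Bott function $\mupb$ dominates the $\epsilon$-perturbation precisely because commutativity forces $\alpha_X$ to be tangent to each critical component, making its normal part $O(\|w\|)$. Both arguments exploit $[\alpha,\beta]=0$, but through different mechanisms (simultaneous diagonalization at points of $X^A$ versus tangency of $\alpha_X$ to the components of the critical set of $\mupb$). Your version buys independence from the step that each component of $X^{\beta+\epsilon\alpha}$ must contain a component of $X^A$, at the price of setting up uniform Taylor estimates in a tubular neighborhood; the paper's version is softer, replacing those estimates by a finite count of connected components and a tangent-space comparison.
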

\begin{proof}
Let $\epsilon >0$ and let  $A = \exp(\mathfrak{a}),$ where $\lia=\mathrm{span}(\alpha,\beta)$.  Since the exponential map is a diffeomorphism restricted on $\liep$, it follows that $A$ is a closed and compatible subgroup of $G$. Let $X^A$ denote the fixed point set of $A$, i.e., $X^A=\{z\in X:\, A\cdot x=x\}$. By  Lemma \ref{compatible}, $X^A=X^\beta \cap X^\alpha$. By Corollary \ref{slice-cor-2},  both $X^\beta \cap X^\alpha$ and $X^{\alpha+\epsilon \beta}$ are compact submanifolds satisfying $X^\beta \cap X^\alpha \subseteq X^{\alpha+\epsilon \beta}$. Since $X^{\alpha+\epsilon \beta}$ is $A$-invariant, and so there exists  $A$-gradient map \cite{heinzner-schwarz-stoetzel}, any connected component of  $X^{\alpha+\epsilon \beta}$ contains a connected component of $X^\alpha \cap X^\beta$.

Let $x\in X^\alpha \cap X^\beta$. Let $C$ be the connected component of $x$ and let $C'$ be the connected component of $X^{\alpha+\epsilon \beta}$ containing $C$.
Since $x$ is fixed by $A$,  by the linearization theorem, Corollary \ref{slice-cor-2}, there exists $A$-invariant open subsets $\Omega \subset X$ and $S\subset  T_x X$ and a $A$-equivariant diffeomorphism $\phi : S \to \Omega$ such that $0\in S,$ $x\in \Omega$, $\phi(0) = x,$ $d\phi_0 = id_{T_x X}.$ Thus we may assume that $\Omega=\R^n$, $\alpha,\beta$ are symmetric matrices of order $n$ satisfying $[\alpha,\beta] = 0$. Moreover, $T_x  X^{\alpha+\epsilon \beta}=\mathrm{Ker}\, (\alpha+\epsilon \beta)$ and
 $T_x  X^{\alpha}\cap T_x X^{\beta}=\mathrm{Ker}\, \alpha \cap \mathrm{Ker}\, \beta$.

The matrices $\alpha$ and $\beta$ are simultaneously diagonalizable. Let $\{e_1,\ldots,e_n\}$ be a basis of $\R^n$ such that $\alpha e_i =a_i e_i$ and $\beta e_i =b_i e_i$ for
$i=1,\ldots, n$. Let $J=\{1\leq i \leq n:\, a_i b_i \neq 0\}$. Pick $\delta =\mathrm{min}\{\frac{| a_i |}{| b_i |}:\, i\in J\}$. Now,
$(\alpha +\epsilon \beta)e_i=0$ if and only if $a_i+\epsilon b_i=0$. If $a_i \neq 0$, then $b_i \neq 0$ and vice-versa. Therefore, for any $\epsilon <\delta$, we get
$
(\alpha +\epsilon \beta)e_i=0,
$
if and only if $a_i=b_i=0$. Therefore, $\mathrm{Ker}\, (\alpha+\epsilon \beta)=\mathrm{Ker}\, \alpha \cap \mathrm{Ker}\, \beta$. Since $C\subset C'$ and $T_x C=T_x C'$, keeping in mind that both $C$ and $C'$ are compact, it follows that $C=C'$. Since $X^{\alpha+\epsilon \beta}$ has finitely many connected components, it follows that there exists $\delta >0$ such that for any $0 < \epsilon < \delta$, we have
\[
X^\alpha \cap X^\beta =X^{\alpha+\epsilon \beta},
\]
concluding the proof.
\end{proof}
\begin{theorem}\label{main1}
Let $\lia \subset \liep$ be an Abelian subalgebra and let $A=\exp(\lia)$. Then the set $$\left\{ \alpha \in \lia:\, X^A=X^\alpha\right\}$$ is dense.
\end{theorem}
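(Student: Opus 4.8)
The plan is to derive the theorem from Lemma \ref{linearization} by an inductive perturbation argument, reducing density to a statement about a single arbitrary nonzero direction. First I would record the linearity that drives everything: for fixed $p\in X$ the map $\lia \to T_p X$, $\beta \mapsto \beta_X(p)$, is linear, so $\beta_X(p)=0$ for every $\beta$ in a spanning set of $\lia$ if and only if $p\in X^A$. Consequently, for any basis $\{\beta_1,\ldots,\beta_n\}$ of $\lia$ one has $X^A = X^{\beta_1}\cap\cdots\cap X^{\beta_n}$. This is the bridge between the ``global'' fixed set $X^A$ and the finite intersections that Lemma \ref{linearization} controls.

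Next, fix an arbitrary $\gamma\in\lia\setminus\{0\}$ and extend it to a basis $\gamma=\alpha_1,\alpha_2,\ldots,\alpha_n$ of $\lia$. Since $\lia$ is Abelian, the $\alpha_i$ pairwise commute, and so does every partial combination built from them. I would then iterate Lemma \ref{linearization}: setting $\beta^{(1)}:=\gamma$ and, having produced $\beta^{(k-1)}$ with $X^{\beta^{(k-1)}}=X^\gamma\cap X^{\alpha_2}\cap\cdots\cap X^{\alpha_{k-1}}$, apply the lemma to the commuting pair $(\beta^{(k-1)},\alpha_k)$ to obtain $\delta_k>0$ and choose $0<\epsilon_k<\delta_k$; put $\beta^{(k)}:=\beta^{(k-1)}+\epsilon_k\alpha_k$, so that $X^{\beta^{(k)}}=X^{\beta^{(k-1)}}\cap X^{\alpha_k}=X^\gamma\cap X^{\alpha_2}\cap\cdots\cap X^{\alpha_k}$. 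After $n-1$ steps, $\beta^{(n)}=\gamma+\epsilon_2\alpha_2+\cdots+\epsilon_n\alpha_n$ satisfies $X^{\beta^{(n)}}=X^\gamma\cap X^{\alpha_2}\cap\cdots\cap X^{\alpha_n}$, which equals $X^A$ by the spanning argument above.

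To obtain density I would observe that at each step $\epsilon_k$ may be chosen as small as desired (any positive number below $\delta_k$ works), so $\beta^{(n)}-\gamma=\sum_{k=2}^n\epsilon_k\alpha_k$ can be made arbitrarily small in norm; hence points of $\{\alpha\in\lia:\,X^\alpha=X^A\}$ accumulate at $\gamma$. As $\gamma$ ranges over the dense subset $\lia\setminus\{0\}$ of $\lia$, the closure of $\{\alpha\in\lia:\,X^\alpha=X^A\}$ contains $\lia\setminus\{0\}$, and being closed it contains all of $\lia$. (The case $\dim\lia=0$ is trivial, since then $X^A=X=X^0$.)

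The only point requiring care, and hence the main obstacle, is the nested dependence of the constants: each $\delta_k$ produced by Lemma \ref{linearization} depends on $\beta^{(k-1)}$, and thus on the previously chosen $\epsilon_2,\ldots,\epsilon_{k-1}$. I would emphasize that this causes no difficulty precisely because the choices are made sequentially --- once $\epsilon_2,\ldots,\epsilon_{k-1}$ are fixed, $\delta_k$ is a fixed positive number and $\epsilon_k$ can be taken below both $\delta_k$ and any prescribed tolerance --- so the total perturbation stays controllably small. All the genuine geometric content (the simultaneous diagonalization in a slice and the coincidence of kernels forcing the connected components $C$ and $C'$ to agree) already lives in Lemma \ref{linearization}; the theorem itself is a clean bootstrapping of that lemma along a basis, combined with the elementary linearity of $\beta\mapsto\beta_X(p)$.
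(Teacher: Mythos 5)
Your proposal is correct and follows essentially the same route as the paper: write $X^A$ as the intersection $X^{\alpha_1}\cap\cdots\cap X^{\alpha_n}$ over a basis extending an arbitrary nonzero $\gamma$, and perturb $\gamma$ by small multiples of the remaining basis vectors using Lemma \ref{linearization} to land in the set $\{\alpha:\,X^\alpha=X^A\}$ arbitrarily close to $\gamma$. If anything, your explicit sequential choice of the $\epsilon_k$ (acknowledging that each $\delta_k$ depends on the previous choices) is more careful than the paper's one-line invocation of a single uniform $\delta$, and it suffices for density.
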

\begin{proof}
Let $\alpha_1,\ldots,\alpha_n$ be a basis of $\lia$. Then
\[
X^A=X^{\alpha_1} \cap \cdots \cap X^{\alpha_n}.
\]
By the above Lemma, there exists $\delta >0$ such that for any $\epsilon_2,\ldots,\epsilon_n < \delta$, we have
\begin{equation}\label{oi}
X^A =X^{\alpha_1+ \epsilon_2 \alpha_2+\cdots +\epsilon_n \alpha_n}
\end{equation}
Let $\alpha \in \lia$ different form $0$. It is well known that there exists $\alpha_2,\ldots \alpha_n\in \lia$ such that $\alpha,\alpha_2,\ldots,\alpha_n$ is a basis of $\lia$. By $(\ref{oi})$, for any neighborhood  $U$ of $\alpha$,  there exists $\beta \in U$ such that
$
X^A =X^\beta,
$
concluding the proof.
\end{proof}
The following Lemma is proved in \cite{bw}, see also  \cite[pag. 1036]{BT}.
\begin{lemma}\label{linearization2}
Let $x\in X$ and $\beta, \alpha \in \liep$ be such that $[\beta, \alpha]= 0.$ Set $y:= \lim_{t\to \infty}\exp(t\beta)x$ and $z:= \lim_{t\to \infty}\exp(t\alpha)y.$ Let $\delta$ be as in Lemma \ref{linearization}. Then for $0<\epsilon<\delta,$
$$
\lim_{t\to \infty}\exp(t(\beta + \epsilon\alpha)) = z.
$$
\end{lemma}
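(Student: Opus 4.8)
The plan is to reduce the statement to an equivariance identity for the limit map $\phi_\infty^\beta$, followed by a single interchange-of-limits argument, where the hypothesis $\epsilon<\delta$ does the real work. Write $\gamma:=\beta+\epsilon\alpha$. Since $\mup^{\gamma}$ is linear in its superscript, $\operatorname{grad}\mu_\liep^\gamma=\gamma_X=\beta_X+\epsilon\alpha_X$, so the gradient flow of $\mu_\liep^\gamma$ is $\exp(t\gamma)$, and by the Linearization Theorem the limit $w:=\phi_\infty^\gamma(x)$ exists and lies in $X^\gamma$. As $0<\epsilon<\delta$, Lemma \ref{linearization} gives $X^\gamma=X^\beta\cap X^\alpha$, so in particular $w\in X^\beta$. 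The goal is then to prove $w=z$.

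First I would record the equivariance of $\phi_\infty^\beta$. Because $[\gamma,\beta]=\epsilon[\alpha,\beta]=0$, the flows $\exp(s\gamma)$ and $\exp(r\beta)$ commute; hence, using continuity of the single diffeomorphism $\exp(s\gamma)$, for every $p\in X$
\[
\phi_\infty^\beta(\exp(s\gamma)p)=\lim_{r\to\infty}\exp(r\beta)\exp(s\gamma)p=\exp(s\gamma)\lim_{r\to\infty}\exp(r\beta)p=\exp(s\gamma)\,\phi_\infty^\beta(p).
\]
Applying this to $p=x$ and using $\phi_\infty^\beta(x)=y\in X^\beta$, so that $\exp(s\beta)y=y$, yields
\[
\phi_\infty^\beta(\exp(s\gamma)x)=\exp(s\gamma)y=\exp(s\epsilon\alpha)\exp(s\beta)y=\exp(s\epsilon\alpha)\,y .
\]
Letting $s\to\infty$, the right-hand side converges to $\phi_\infty^\alpha(y)=z$. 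Thus everything reduces to showing that the left-hand side converges to $w$, i.e. that $\phi_\infty^\beta(\exp(s\gamma)x)\to\phi_\infty^\beta(w)=w$, the last equality holding because $w\in X^\beta$.

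This commutation of the two limits — the inner $r\to\infty$ defining $\phi_\infty^\beta$ and the outer $s\to\infty$ — is the heart of the matter and the step I expect to be the main obstacle, since $\phi_\infty^\beta$ is not continuous on all of $X$. To handle it I would pass to the Morse--Bott model near $w$ furnished by Corollary \ref{slice-cor-2} for the abelian group $\exp(\lia)$ with $\lia=\spam(\alpha,\beta)$: in suitable coordinates $w$ becomes the origin and the intrinsic differentials $B=\mathrm d\beta_X(w)$ and $A=\mathrm d\alpha_X(w)$ are commuting self-adjoint operators, simultaneously diagonalized, say $B e_i=b_i e_i$, $A e_i=a_i e_i$. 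Setting $q_s:=\exp(s\gamma)x\to w$, for large $s$ the orbit lies in the local stable manifold of $w$ for the $\gamma$-flow, i.e. in the span of the $e_i$ with $b_i+\epsilon a_i\le 0$. The key point, which is exactly the eigenvalue estimate underlying Lemma \ref{linearization}, is that $0<\epsilon<\delta$ forces $b_i+\epsilon a_i\le 0\Rightarrow b_i\le 0$, ruling out the only dangerous configuration $b_i>0$, $a_i<0$, $\epsilon\ge |b_i|/|a_i|$. Consequently $q_s$ has no component along the $\beta$-unstable directions, so $\exp(r\beta)q_s$ stays in the linearizing neighborhood and $\phi_\infty^\beta(q_s)$ equals the $\ker B$-component of $q_s$, which tends to $0=w$ as $q_s\to w$. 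Comparing with $\phi_\infty^\beta(q_s)=\exp(s\epsilon\alpha)y\to z$ then gives $w=z$, as desired. The one point requiring care is the assertion that a convergent $\gamma$-orbit eventually lies in the local stable manifold; I would justify this by the standard fact that, in the linearized model, any orbit with a nonzero $\gamma$-unstable component leaves every neighborhood of $w$, contradicting $q_s\to w$.
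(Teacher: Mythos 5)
The paper offers no proof of this lemma to compare against: it is simply quoted from \cite{bw} and \cite[p.~1036]{BT}. Your argument is a correct, self-contained proof along the lines one expects from those references. The reduction is clean: setting $\gamma=\beta+\epsilon\alpha$ and $w=\phi_\infty^\gamma(x)\in X^\gamma=X^\beta\cap X^\alpha$, the commutation identity $\phi_\infty^\beta(\exp(s\gamma)x)=\exp(s\epsilon\alpha)y\to z$ is right, and you correctly isolate the interchange of limits (continuity of $\phi_\infty^\beta$ along the $\gamma$-orbit at $w$) as the only genuine issue, resolving it in the $A$-equivariant linearized model at $w$ via the sign implication $b_i+\epsilon a_i\le 0\Rightarrow b_i\le 0$.

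One point must be made explicit, because as written it is the weak link. The implication $b_i+\epsilon a_i\le 0\Rightarrow b_i\le 0$ does \emph{not} follow from the conclusion of Lemma \ref{linearization}: the kernel identity $\ker(B+\epsilon A)=\ker B\cap\ker A$ is perfectly compatible with $b_i=1$, $a_i=-1$, $\epsilon=2$, a configuration in which your argument fails (the direction $e_i$ is $\gamma$-stable but $\beta$-unstable, so $\phi_\infty^\beta(q_s)$ need not tend to $w$). The implication uses the particular $\delta=\min|b_i|/|a_i|$ constructed in the \emph{proof} of Lemma \ref{linearization}. Moreover, you need that bound at the point $w=\phi_\infty^\gamma(x)$, which varies with $x$, whereas the proof of Lemma \ref{linearization} extracts its $\delta$ from one chosen point per connected component of $X^A$. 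So you should add the observation that the relevant eigenvalue ratios are uniformly bounded below over $X^A$: the nonzero eigenvalues of $\mathrm d\beta_X$ stay away from $0$ along each component because $\mupb$ is Morse--Bott (the transverse Hessian has constant rank along a critical component), while $\|\mathrm d\alpha_X\|$ is bounded above by compactness. With that uniform bound in hand, the rest of your local analysis --- a converging $\gamma$-orbit has no component on eigenvalues $b_i+\epsilon a_i\ge 0$, hence none on $b_i>0$, hence $\phi_\infty^\beta(q_s)$ is the $\ker B$-part of $q_s$ and tends to $w$ --- is complete and gives $w=z$.
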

As a consequence of the above Lemma we get the following result.
\begin{theorem}
Let $\alpha_1,\ldots,\alpha_n$ be a basis of $\lia$. Let $x\in X$. Set $x_1:= \lim_{t\to \infty}\exp(t\alpha_1) x$ and $x_{i}= \lim_{t\to \infty}\exp(t\alpha_i) x_{i-1}$ for $i=2,\ldots,n$. Then there exists $\delta>0$ such that for $0<\epsilon_2,\ldots,\epsilon_n<\delta,$ we have
$$
\lim_{t\to \infty}\exp(t(\alpha_1 + \epsilon_2 \alpha_2 +\cdots + \epsilon_n \alpha_n))x = x_n,
$$
for any $x\in X$. In particular, $
\phi_\infty^{\alpha_1+\epsilon_2 \alpha_2 +\cdots + \epsilon_n \alpha_n}=\phi_\infty^{\alpha_n}\circ \cdots \circ \phi_\infty^{\alpha_1}.
$
\end{theorem}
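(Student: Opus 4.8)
The plan is to induct on $n$, using Lemma \ref{linearization2} to peel off one generator at each stage. For $n=1$ there is nothing to prove, since by definition $\phi_\infty^{\alpha_1}(x)=\lim_{t\to\infty}\exp(t\alpha_1)x=x_1$. For the inductive step I would write
\[
\beta_k:=\alpha_1+\epsilon_2\alpha_2+\cdots+\epsilon_k\alpha_k,\qquad k=1,\ldots,n,
\]
so that $\beta_1=\alpha_1$ and $\beta_k=\beta_{k-1}+\epsilon_k\alpha_k$, and assume inductively that for a suitable choice of $\epsilon_2,\ldots,\epsilon_{k-1}$ one has $\phi_\infty^{\beta_{k-1}}=\phi_\infty^{\alpha_{k-1}}\circ\cdots\circ\phi_\infty^{\alpha_1}$, hence $\phi_\infty^{\beta_{k-1}}(x)=x_{k-1}$ for every $x\in X$.

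Since $\beta_{k-1}$ and $\alpha_k$ both lie in the abelian algebra $\lia$, we have $[\beta_{k-1},\alpha_k]=0$, so Lemma \ref{linearization2} applies to the pair $(\beta_{k-1},\alpha_k)$. It provides $\delta_k>0$ such that, for $0<\epsilon_k<\delta_k$,
\[
\phi_\infty^{\beta_k}(x)=\phi_\infty^{\beta_{k-1}+\epsilon_k\alpha_k}(x)=\lim_{t\to\infty}\exp(t\alpha_k)\,\phi_\infty^{\beta_{k-1}}(x)=\lim_{t\to\infty}\exp(t\alpha_k)\,x_{k-1}=x_k .
\]
In other words $\phi_\infty^{\beta_k}=\phi_\infty^{\alpha_k}\circ\phi_\infty^{\beta_{k-1}}=\phi_\infty^{\alpha_k}\circ\cdots\circ\phi_\infty^{\alpha_1}$. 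Taking $k=n$ yields at once the pointwise identity $\lim_{t\to\infty}\exp(t(\alpha_1+\epsilon_2\alpha_2+\cdots+\epsilon_n\alpha_n))x=x_n$ and, reading it as an equality of limit maps, the claimed factorisation $\phi_\infty^{\alpha_1+\epsilon_2\alpha_2+\cdots+\epsilon_n\alpha_n}=\phi_\infty^{\alpha_n}\circ\cdots\circ\phi_\infty^{\alpha_1}$; the two formulations coincide because the pointwise statement is nothing but $\phi_\infty^{\beta_n}$ evaluated at $x$.

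The subtle point, and the main obstacle, is the choice of $\delta$. The threshold $\delta_k$ produced at the $k$-th stage is the constant of Lemma \ref{linearization} for the pair $(\beta_{k-1},\alpha_k)$, and therefore depends on $\beta_{k-1}$, that is, on $\epsilon_2,\ldots,\epsilon_{k-1}$. Inspecting the proof of Lemma \ref{linearization}, where $\beta_{k-1}$ and $\alpha_k$ are simultaneously diagonalised at each point of $X^A$, one sees that $\delta_k$ is governed by the ratios of the eigenvalues of $\beta_{k-1}$ to those of $\alpha_k$; these can degenerate as the earlier parameters shrink, so the $\epsilon_i$ should be selected successively, each $\epsilon_k$ small relative to the already-fixed $\epsilon_2,\ldots,\epsilon_{k-1}$, rather than independently below one fixed bound. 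I would therefore obtain $\delta$ as the output of this nested selection, setting $\delta:=\min\{\delta_2,\ldots,\delta_n\}$ along a fixed admissible sequence of perturbations; the real work lies in the careful bookkeeping of this dependence rather than in any single application of Lemma \ref{linearization2}.
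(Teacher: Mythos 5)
Your inductive scheme---peeling off one generator at a time with Lemma \ref{linearization2}---is not the route the paper takes, and the difficulty you flag at the end is a genuine gap, not just bookkeeping. As you correctly observe, the threshold $\delta_k$ supplied by Lemma \ref{linearization} for the pair $(\beta_{k-1},\alpha_k)$ is a minimum of eigenvalue ratios $|a_i|/|b_i|$, where the $a_i$ are now eigenvalues of $\beta_{k-1}=\alpha_1+\epsilon_2\alpha_2+\cdots+\epsilon_{k-1}\alpha_{k-1}$. If at some point of $X^A$ a weight vanishes on $\alpha_1$ but not on $\alpha_2$ and $\alpha_k$, the corresponding eigenvalue of $\beta_{k-1}$ is of order $\epsilon_2$, so $\delta_k=O(\epsilon_2)$ really does collapse as $\epsilon_2\to 0$. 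Consequently your construction only yields the nested statement ``for $\epsilon_2<\delta_2$, then $\epsilon_3<\delta_3(\epsilon_2)$, then $\epsilon_4<\delta_4(\epsilon_2,\epsilon_3)$, \dots'', and the proposed repair $\delta:=\min\{\delta_2,\ldots,\delta_n\}$ ``along a fixed admissible sequence'' does not close it: the later $\delta_k$ are functions of the earlier $\epsilon$'s, not constants, so a minimum taken along one admissible sequence controls nothing at the other points of the box $(0,\delta)^{n-1}$. The theorem as stated asserts a single $\delta$ valid for all independent choices of $\epsilon_2,\ldots,\epsilon_n$ in $(0,\delta)$ and all $x\in X$ simultaneously, and that uniform claim is exactly what your argument does not reach.

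The paper's proof is organized precisely to sidestep this iteration. It first invokes Theorem \ref{main1} to fix one $\delta$ for which all the perturbed fixed--point sets $X^{\alpha_1+\epsilon_2\alpha_2+\cdots+\epsilon_n\alpha_n}$ coincide with $X^A$; it then applies the Slice Theorem in the form of Corollary \ref{slice-cor-2} to the \emph{whole} abelian group $A=\exp(\lia)$ at a point $z\in X^A$, so that in a single $A$-invariant linearizing chart the question becomes a linear-algebra statement about the entire family $\alpha_1+\sum_j\epsilon_j\alpha_j$ at once, rather than about successive pairs; finally, compactness of $X^A$ gives a finite cover by such charts $\Omega_1,\ldots,\Omega_k$, each with its own constant $\delta_j$, and $\delta=\min\{\delta_1,\ldots,\delta_k\}$ is then an honest constant. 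The parts of your argument that do work---the base case, the identification $\phi_\infty^{\alpha_k}\circ\phi_\infty^{\beta_{k-1}}=\phi_\infty^{\alpha_k}\circ\cdots\circ\phi_\infty^{\alpha_1}$, and the reading of the pointwise limit as the factorization of limit maps---are fine, but to prove the stated theorem you need to replace the one-generator-at-a-time reduction by a simultaneous linearization of $A$ near $X^A$ plus a compactness argument, as the paper does.
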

\begin{proof}
By Theorem \ref{main1}, there exists $\delta >0$ such that for any $0<\epsilon_2,\ldots,\epsilon_n<\delta$, we have
\[
X^A=X^{\alpha_1+\epsilon_2 \alpha_2+\cdots+\epsilon \alpha_n}. 
\]
Let $A=\exp(\lia)$. Let $z\in X^A$. By Corollary \ref{slice-cor-2}, there exists $A$-invariant open subsets $\Omega \subset X$ and $S\subset T_z X$ and a $A$-equivariant diffeomorphism $\phi : S \to \Omega$ such that $0\in S,$ $z\in \Omega$, $\phi(0) = z,$ $d\phi_0 = id_{T_z X}.$  Let $x\in X$. Set $x_1:= \lim_{t\to \infty}\exp(t\alpha_1) x$ and $x_{i}= \lim_{t\to \infty}\exp(t\alpha_i) x_{i-1}$ for $i=2,\ldots,n$. If $x_n \in \Omega$, we may choce $\delta >0$ such that for any $0<\epsilon_2,\ldots,\epsilon_n <\delta$, we have
\[
\lim_{t\mapsto} \exp(t(\alpha_1 + \epsilon_2\alpha_2+\cdots+\epsilon \alpha_n))x=x_n.
\]
By compactness of $X^A$ there exist open subsets $\Omega_1,\ldots,\Omega_k$  satisfying the above property and such that
\[
X^A \subseteq \Omega_1 \cup \cdots \cup \Omega_k.
\]
Let $x\in X$.  Set $x_1:= \lim_{t\to \infty}\exp(t\alpha_1) x$ and $x_{i}= \lim_{t\to \infty}\exp(t\alpha_i ) x_{i-1}$ for $i=2,\ldots,n$. If $x_n \in \Omega_j$, for some $j=1,\ldots,k$, then there exits $\delta_j>0$ such that any $0<\epsilon_2,\ldots,\epsilon_n<\delta_j$ we have
\[
\lim_{t\mapsto +\infty} \exp(t(\alpha_1 +\epsilon_2 \alpha_2+\cdots+\epsilon \alpha_n))x=x_n.
\]
Let $\delta=\mathrm{min} \{\delta_1,\ldots,\delta_k\}$. Then for any $0<\epsilon_2,\ldots,\epsilon_n<\delta$ we have
\[
\lim_{t\mapsto+\infty} \exp(t(\alpha_1 +\epsilon_2 \alpha_2 +\cdots+\epsilon_n \alpha_n))x=x_n,
\]
for any $x\in X$, concluding the proof.
\end{proof}

\end{document}